\title{A new characterization of $q_{\omega}$-compact algebras}
\author{M. Shahryari}
\address{M. Shahryari: Department of Pure Mathematics,  Faculty of Mathematical
Sciences, University of Tabriz, Tabriz, Iran}
\email{mshahryari@tabrizu.ac.ir}
\newcommand{\Rad}{\mathrm{Rad}}
\newcommand{\LL}{\mathcal{L}}
\newcommand{\VV}{\mathbf{V}}
\newcommand{\FX}{F_{\mathbf{V}}(n)}
\newcommand{\SigV}{\Sigma(\mathbf{V})}
\newcommand{\ConV}{\mathrm{Con}(\mathbf{V})}
\newcommand{\PconV}{\mathrm{PCon}(\mathbf{V})}
\newcommand{\qo}{q_{\omega}}
\newtheorem {theorem}{Theorem}
\newtheorem{lemma}{Lemma}
\newtheorem{definition}{Definition}
\begin{document}

\maketitle
\begin{abstract}
In this note, we give a new characterization for an algebra to be $\qo$-compact in terms of {\em super-product operations} on the lattice of congruences of the relative free algebra.
\end{abstract}

{\bf AMS Subject Classification} Primary 03C99, Secondary 08A99 and 14A99.\\
{\bf Keywords} algebraic structures; equations; algebraic set; radical ideal;
 $q_{\omega}$-compactness;  filter-power; geometric equivalence; relatively free algebra; quasi-identity; quasi-variety.

\section{Introduction}
In this article, our notations are the same as \cite{DMR1}, \cite{DMR2}, \cite{DMR3}, \cite{DMR4} and \cite{ModSH}. The reader should review these references for a complete account of the universal algebraic geometry. However, a brief review of fundamental notions will be given in the next section.

Let $\LL$ be an algebraic language,   $A$ be an algebra of type $\LL$ and  $S$ be a system of equation in the language $\LL$. Recall that an equation $p\approx q$ is a logical consequence of $S$ with respect to $A$, if any solution of $S$ in $A$ is also a solution of $p\approx q$. The radical $\Rad_A(S)$ is the set of all logical consequences of  $S$ with respect to  $A$. This radical is clearly a congruence of the term algebra $T_{\LL}(X)$ and in fact it is the largest subset of the term algebra which is equivalent to $S$ with respect to $A$. Generally, this logical system of equations with respect to $A$ does not obey the ordinary compactness of the first order logic. We say that an algebra $A$ is  $\qo$-compact, if for any system $S$ and any consequence $p\approx q$, there exists a finite subset $S_0\subseteq S$ with the property that $p\approx q$ is a consequence of $S_0$ with respect to $A$. This property of being $\qo$-compact is equivalent to
$$
\Rad_A(S)=\bigcup_{S_0}\Rad_A(S_0),
$$
where $S_0$ varies  in the set of all finite subsets of $S$. If we look at the map $\Rad_A$ as a closure operator on the lattice of systems of equations in the language $\LL$, then we see that $A$ is $\qo$-compact if and only if $\Rad_A$ is an algebraic. The class of $\qo$-compact algebras is very important and it contains many elements. For example, all equationally noetherian algebras belong to this class. In \cite{DMR3}, some equivalent conditions for $\qo$-compactness are given. Another equivalent condition is obtained in \cite{MR} in terms of {\em geometric equivalence}. It is proved that (the proof is implicit in \cite{MR}) an algebra $A$ is $\qo$-compact if and only if $A$ is geometrically equivalent to any of its filter-powers. We will discuss geometric equivalence in the next section. We will use this fact of \cite{MR} to obtain a new characterization of $\qo$-compact algebras. Although our main result will be formulated in an arbitrary variety of algebras, in this introduction, we give a simple description of this result for the case of the variety of all algebras of type $\LL$.

Roughly speaking, a {\em super-product operation} is a map $C$ which takes a set $K$ of congruences of the term algebra and returns a new congruence $C(K)$ such that for all $\theta\in K$, we have $\theta\subseteq C(K)$. For an algebra $B$ define a map $T_B$ which takes a system $S$ of equations and returns
$$
T_B(S)=\{ \Rad_B(S_0):\ S_0\subseteq S,\ |S_0|<\infty\}.
$$
Suppose for all algebra $B$ we have $C\circ T_B\leq \Rad_B$. We prove that an algebra $A$ is $\qo$-compact if and only if $C\circ T_A=\Rad_A$.

\section{Main result}
Suppose $\LL$ is an algebraic language. All algebras we are dealing with, are of type $\LL$. Let $\VV$ be a variety of algebras. For any $n\geq 1$, we denote the relative free algebra of $\VV$, generated by the finite set $X=\{ x_1, \ldots, x_n\}$, by $\FX$. Clearly, we can assume that an arbitrary element $(p, q)\in \FX^2$ is an equation in the variety $\VV$ and we can denote it by $p\approx q$. We introduce the following list of notations:\\

1- $\mathrm{P}(\FX^2)$ is the set of all systems of equations in the variety $\VV$.\\

2- $\mathrm{Con}(\FX)$ is the set of all congruences of $\FX$.\\

3- $\Sigma(\VV)=\bigcup_{n=1}^{\infty} P(\FX^2)$.\\

4- $\ConV= \bigcup_{n=1}^{\infty}\mathrm{Con}(\FX)$.\\

5- $\PconV= \bigcup_{n=1}^{\infty}\mathrm{P}(\mathrm{Con}(\FX))$.\\

6- $\qo(\VV)$ is the set of all $\qo$-compact elements of $\VV$.\\
\newline
Note that, we have $\ConV\subseteq \SigV$. For any algebra $B\in \VV$, the map $\Rad_B:\SigV\to \SigV$ is a closure operator and $B$ is $\qo$-compact, if and only if this operator is algebraic. Define a map
$$
T_B:\SigV\to \PconV
$$
by
$$
T_B(S)=\{ \Rad_B(S_0):\ S_0\subseteq S,\ |S_0|<\infty\}.
$$

\begin{definition}
A map $C:\PconV\to \ConV$ is  called a super-product operation, if for any $K\in \PconV$ and $\theta\in K$, we have $\theta\subseteq C(K)$.
\end{definition}

There are many examples of such operations; the ordinary product of normal subgroups in the varieties of groups is the simplest one. For another example, we can look at the map $C(K)=\Rad_B(\bigcup_{\theta\in K}\theta)$, for a given fixed $B\in \VV$. We are now ready to present our main result.

\begin{theorem}
Let $C$ be a super-product operation such that for any $B\in\VV$, we have $C\circ T_B\leq \Rad_B$. Then
$$
\qo(\VV)=\{ A\in \VV:\ C\circ T_A=\Rad_A\}.
$$
\end{theorem}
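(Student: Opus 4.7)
The plan is to prove the two set-inclusions separately, and for the harder direction to invoke the filter-power characterization from \cite{MR} recalled in the introduction: an algebra $A$ is $\qo$-compact if and only if $\Rad_A=\Rad_B$ for every filter-power $B$ of $A$.

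For the inclusion $\qo(\VV)\subseteq\{A:C\circ T_A=\Rad_A\}$, I would fix $A\in\qo(\VV)$ and any $S\in\SigV$. By $\qo$-compactness, $\Rad_A(S)=\bigcup_{S_0}\Rad_A(S_0)$ over finite $S_0\subseteq S$. The super-product axiom forces $C(T_A(S))\supseteq\Rad_A(S_0)$ for each such $S_0$, hence $C(T_A(S))\supseteq\Rad_A(S)$. Combining with the standing hypothesis $C\circ T_A\leq\Rad_A$ yields the desired equality.

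For the converse, I would assume $C\circ T_A=\Rad_A$ and show that $A$ is geometrically equivalent to every one of its filter-powers $B=A^I/D$. The crucial point is that for every \emph{finite} $S_0\subseteq S$ one has $\Rad_A(S_0)=\Rad_B(S_0)$: the assertion ``$p\approx q$ is a consequence of $S_0$ with respect to $A$'' is a single universal Horn sentence (a quasi-identity with finitely many premises), and such sentences are preserved in both directions under reduced powers. Consequently $T_A(S)=T_B(S)$ for every $S\in\SigV$. Applying the hypothesis to $B$ then gives
\[
\Rad_A(S)=C(T_A(S))=C(T_B(S))\subseteq\Rad_B(S).
\]
The reverse inclusion $\Rad_B(S)\subseteq\Rad_A(S)$ is automatic via the diagonal embedding $A\hookrightarrow B$: any $A$-solution of $S$ lifts, by constant sequences, to a $B$-solution of $S$, so every $B$-consequence of $S$ is already an $A$-consequence. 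Hence $\Rad_A=\Rad_B$ on all of $\SigV$ for every filter-power $B$, and the result from \cite{MR} delivers that $A\in\qo(\VV)$.

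The main obstacle I anticipate is justifying the Horn-preservation identity $T_A(S)=T_B(S)$ for arbitrary filter-powers; once this standard model-theoretic fact is in hand, the rest is a clean chase of inclusions in which the hypothesis $C\circ T_B\leq\Rad_B$ for \emph{every} $B\in\VV$ (not just for $A$) plays the decisive role by being applied to $B=A^I/D$.
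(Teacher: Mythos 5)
Your proposal is correct and follows essentially the same route as the paper: the easy inclusion via the super-product axiom together with the algebraicity of $\Rad_A$, and the converse via the equality $\Rad_A(S_0)=\Rad_B(S_0)$ for finite $S_0$ on filter-powers (quasi-identity preservation), the hypothesis $C\circ T_B\leq \Rad_B$ applied to the filter-power $B$ itself, and the criterion that geometric equivalence to all filter-powers implies $\qo$-compactness. The only real difference is that you invoke that last criterion as a black box from \cite{MR}, whereas the paper devotes the bulk of its argument to actually proving it for arbitrary algebras (via prevarieties, quasivarieties, ultrapowers and the compactness theorem), since \cite{MR} contains it only implicitly and only for groups.
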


To prove the theorem, we first give a proof for the following claim. Note that it is implicitly proved in \cite{MR} for the case of groups. \\

{\em An algebra is $\qo$-compact if and only if it is geometrically equivalent to any of its filter-powers.}\\

Let $A\in \VV$ be a $\qo$-compact algebra and $I$ be a set of indices. Let $F\subseteq P(I)$ be a filter and $B=A^I/F$ be the corresponding filter-power. We know that the quasi-varieties generated by $A$ and $B$ are the same. So, these algebras have the same sets of quasi-identities. Now, suppose that $S_0$ is a finite system of equations and $p\approx q$ is another equation. Consider the following quasi-identity
$$
\forall \overline{x} (S_0(\overline{x})\to p(\overline{x})\approx q(\overline{x})).
$$
This quasi-identity is true in $A$, if and only if it is true in $B$. This shows that $\Rad_A(S_0)=\Rad_B(S_0)$.
Now, for an arbitrary system $S$, we have
\begin{eqnarray*}
\Rad_A(S)&=&\bigcup_{S_0}\Rad_A(S_0)\\
         &=&\bigcup_{S_0}\Rad_B(S_0)\\
         &\subseteq&\Rad_B(S).
\end{eqnarray*}
Note that in the above equalities, $S_0$ ranges in the set of finite subsets of $S$. Clearly, we have $\Rad_B(S)\subseteq \Rad_A(S)$, since $A\leq B$. This shows that $A$ and $B$ are geometrically equivalent. To prove the converse, we need to define some notions. Let $\mathfrak{X}$ be a prevariety, i.e. a class of algebras closed under product and subalgebra. For any $n\geq 1$, let $F_{\mathfrak{X}}(n)$ be the free element of $\mathfrak{X}$ generated by $n$ elements. Note that if $\VV=var(\mathfrak{X})$, then $F_{\mathfrak{X}}(n)=\FX$. A congruence $R$ in $F_{\mathfrak{X}}(n)$ is called an $\mathfrak{X}$-radical, if $F_{\mathfrak{X}}(n)/R\in \mathfrak{X}$. For any $S\subseteq F_{\mathfrak{X}}(n)^2$, the least $\mathfrak{X}$-radical containing $S$ is denoted by $\Rad_{\mathfrak{X}}(S)$.

\begin{lemma}
For an algebra $A$ and any system $S$, we have
$$
\Rad_A(S)=\Rad_{pvar(A)}(S),
$$
where $pvar(A)$ is the prevariety generated by $A$.
\end{lemma}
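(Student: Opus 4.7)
The plan is to prove both inclusions by exploiting two standard facts: that the prevariety $pvar(A)$ coincides with $SP(A)$, the class of subalgebras of powers of $A$; and that $\Rad_A(S)$ admits the description
\[
\Rad_A(S) \;=\; \bigcap_{\varphi} \ker \varphi,
\]
where $\varphi$ ranges over homomorphisms $\FX \to A$ satisfying $\varphi(p)=\varphi(q)$ for every $(p,q) \in S$. This second equality is just the translation of ``logical consequence with respect to $A$'' into the language of kernels, and it furnishes direct access to $\Rad_A(S)$.

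For the inclusion $\Rad_{pvar(A)}(S) \subseteq \Rad_A(S)$ I would verify that $\Rad_A(S)$ itself qualifies as a $pvar(A)$-radical containing $S$. Containment of $S$ is immediate. The diagonal map assembled from the $S$-trivial homomorphisms $\varphi$ above embeds $\FX/\Rad_A(S)$ into a power of $A$, placing the quotient in $SP(A)=pvar(A)$. Since $\Rad_{pvar(A)}(S)$ is by definition the least $pvar(A)$-radical containing $S$, the desired inclusion follows.

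For the reverse inclusion, set $R=\Rad_{pvar(A)}(S)$. Since $\FX/R \in pvar(A)=SP(A)$, there is an embedding $\iota : \FX/R \hookrightarrow A^J$ for some index set $J$. Composing the quotient map $\FX \to \FX/R$ with $\iota$ and each coordinate projection produces a family of homomorphisms $\varphi_j : \FX \to A$ such that $\bigcap_{j \in J} \ker \varphi_j = R$. Because $R \supseteq S$, each $\varphi_j$ is $S$-trivial, so each $\ker \varphi_j$ already appears in the intersection computing $\Rad_A(S)$. Intersecting over $j$ yields $R \supseteq \Rad_A(S)$, as required.

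There is no real obstacle in the argument; the only technical point worth flagging is that the class of $pvar(A)$-radicals containing $S$ must be closed under arbitrary intersection, so that ``least $pvar(A)$-radical containing $S$'' is a meaningful notion. This closure is immediate from the fact that $pvar(A)$ is closed under products and subalgebras, and this is also exactly what identifies $pvar(A)$ with $SP(A)$.
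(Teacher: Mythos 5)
Your proof is correct and follows essentially the same route as the paper: both directions rest on the fact that $\FX/\Rad_A(S)$ embeds diagonally into a power of $A$, and that any algebra in $pvar(A)=SP(A)$ is separated by $A$ via the coordinate projections of such an embedding. The only cosmetic difference is that you phrase the reverse inclusion directly as an intersection of kernels, whereas the paper argues contrapositively with a separating homomorphism $\psi:B\to A$; the underlying content is identical.
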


\begin{proof}
Since $F_{\mathfrak{X}}(n)/\Rad_A(S)$ is a coordinate algebra over $A$, so it embeds in a direct power of $A$ and hence it is an element of $pvar(A)$. This shows that
$$
\Rad_{pvar(A)}(S)\subseteq \Rad_A(S).
$$
Now, suppose $(p,q)$ does not belong to $\Rad_{pvar(A)}(S)$. So, there exists $B\in pvar(A)$ and a homomorphism $\varphi:F_{\mathfrak{X}}(n)\to B$ such that $S\subseteq \ker \varphi$ and $\varphi(p)\neq \varphi(q)$. But, $B$ is separated by $A$, hence there is a homomorphism $\psi:B\to A$ such that
$\psi(\varphi(p))\neq \psi(\varphi(q))$. This shows that $(p, q)$ does not belong to $\ker (\psi\circ \varphi)$. Therefore, it is not in $\Rad_A(S)$.
\end{proof}

Note that, since $pvar(A)$ is not axiomatizable in general, so we can not give a deductive description of elements of $\Rad_A(S)$. But, for $\Rad_{var(A)}(S)$ and $\Rad_{qvar(A)}(S)$ this is possible, because the variety and quasi-variety generated by $A$ are axiomatizable. More precisely, we have:\\

1- Let $\mathrm{Id}(A)$ be the set of all identities of $A$. Then $\Rad_{var(A)}(S)$ is the set of all logical consequences of $S$ and $\mathrm{Id}(A)$.\\

2- Let  $\mathrm{Q}(A)$ be the set of all identities of $A$. Then $\Rad_{qvar(A)}(S)$ is the set of all logical consequences of $S$ and $\mathrm{Q}(A)$.\\

We can now, prove the converse of the claim. Suppose $A$ is not $\qo$-compact. We show that
$$
pvar(A)_{\omega}\neq qvar(A)_{\omega}.
$$
Recall that for and arbitrary class $\mathfrak{X}$, the notation $\mathfrak{X}_{\omega}$ denotes the class of finitely generated elements of $\mathfrak{X}$. Suppose in contrary we have the equality
$$
pvar(A)_{\omega}= qvar(A)_{\omega}.
$$
Assume that $S$ is an arbitrary system and $(p, q)\in \Rad_A(S)$. Hence, the infinite quasi-identity
$$
\forall \overline{x}(S(\overline{x})\to p(\overline{x})\approx q(\overline{x}))
$$
is true in $A$. So, it is also true in $pvar(A)$. As a result, every element from $qvar(A)_{\omega}$ satisfies this infinite quasi-identity. Let $F_A(n)=F_{var(A)}(n)$. We have $F_A(n)\in qvar(A)_{\omega}$ and hence $\Rad_{qvar(A)}(S)$ depends only on $qvar(A)_{\omega}$. In other words, $(p, q)\in \Rad_{qvar(A)}(S)$, so $p\approx q$ is a logical consequence of the set of $S+\mathrm{Q}(A)$. By the compactness theorem of the first order logic, there exists a finite subset $S_0\subseteq S$ such that $p\approx q$ is a logical consequence of $S_0+\mathrm{Q}(A)$. This shows that $(p, q)\in\Rad_{qvar(A)}(S_0)$. But $\Rad_{qvar(A)}(S_0)\subseteq \Rad_A(S_0)$. Hence $(p, q)\in \Rad_A(S_0)$, violating our assumption of non-$\qo$-compactness of $A$. We now showed that
$$
pvar(A)_{\omega}\neq qvar(A)_{\omega}.
$$
By the algebraic characterizations of the classes $pvar(A)$ and $qvar(A)$, we have
$$
SP(A)_{\omega}\neq SPP_u(A)_{\omega},
$$
where $P_u$ is the ultra-product operation. This shows that there is an ultra-power $B$ of $A$ such that
$$
SP(A)_{\omega}\neq SP(B)_{\omega}.
$$
In other words the classes  $pvar(A)_{\omega}$ and $pvar(B)_{\omega}$ are different. We claim that $A$ and $B$ are not geometrically equivalent. Suppose this is not the case. Let $A_1\in pvar(A)_{\omega}$. Then $A_1$ is a coordinate algebra over $A$, i.e. there is a system $S$ such that
$$
A_1=\frac{\FX}{\Rad_A(S)}.
$$
Since $\Rad_A(S)=\Rad_B(S)$, so
$$
A_1=\frac{\FX}{\Rad_B(S)},
$$
and hence $A_1$ is a coordinate algebra over $B$. This argument shows that
$$
pvar(A)_{\omega}=pvar(B)_{\omega},
$$
which is a contradiction. Therefore $A$ and $B$ are not geometrically equivalent and this completes the proof of the claim. We can now complete the proof of the theorem. Assume that $C\circ T_A=\Rad_A$. We show that $A$ is geometrically equivalent to any of its filter-powers. So, let $B=A^I/F$ be a filter-power of $A$. Note that we already proved that for a finite system $S_0$, the radicals $\Rad_A(S_0)$ and $\Rad_B(S_0)$ are the same. Suppose that $S$ is an arbitrary system of equations. We have
\begin{eqnarray*}
\Rad_A(S)&=&C(T_A(S))\\
         &=&C(\{ \Rad_A(S_0):\ S_0\subseteq S, |S_0|<\infty\})\\
         &=&C(\{ \Rad_B(S_0):\ S_0\subseteq S, |S_0|<\infty\})\\
         &\subseteq& \Rad_B(S).
\end{eqnarray*}
So we have $\Rad_A(S)=\Rad_B(S)$ and hence $A$ and $B$ are geometrically equivalent. This shows that $A$ is $\qo$-compact. Conversely, let $A$ be $\qo$-compact. For any system $S$, we have
\begin{eqnarray*}
\Rad_A(S)&=&\bigcup_{S_0}\Rad_A(S_0)\\
         &=&\bigvee\{ \Rad_A(S_0): S_0\subseteq S, |S_0|<\infty\}\\
         &=&\bigvee T_A(S),
\end{eqnarray*}
where $\bigvee$ denotes the least upper bound. By our assumption, $C(T_A(S))\subseteq \Rad_A(S)$, so $C(T_A(S))\subseteq \bigvee T_A(S)$. On the other hand, for any finite $S_0\subseteq S$, we have $\Rad_A(S_0)\subseteq C(T_A(S))$. This shows that
$$
C(T_A(S))=\bigvee T_A(S),
$$
and hence $C\circ T_A=\Rad_A$. The proof is now completed.

\end{document}